\documentclass[11pt]{amsart}
\setcounter{footnote}{2}
\usepackage{latexsym}
\usepackage{amsthm}
\usepackage{amssymb}
\usepackage[utf8]{inputenc}
\usepackage[T1]{fontenc}
\usepackage[all]{xy}
\usepackage{amsfonts}
\usepackage{amsmath}
\usepackage{graphicx}
\usepackage{pstricks}
\usepackage{url}
\usepackage{hyperref}
\usepackage[a4paper]{geometry}
\geometry{hscale=0.75,vscale=0.65,centering}
\author{A. Nibirantiza} 
\address[Aboubacar Nibirantiza]{University of Burundi, Institute of Applied Pedagogy, Department of mathematics, B.P 2523, Bujumbura-Burundi}
\email{aboubacar.nibirantiza[at].ub.edu.bi}
\date{\today} 
\title[Heisenberg order of differential operators on the superspaces...]{Heisenberg order of differential operators on the superspaces $\R^{2l+1|n}$ }

\newtheorem{theorem}{Theorem}[section]

\newtheorem{df}[theorem]{Definition}

\newtheorem{rmk}[theorem]{Remark}

\newtheorem{prop}[theorem]{Proposition}


\newcommand{\R}{\mathbb{R}}

\newcommand{\N}{\mathbb{N}}

\newcommand{\id}{\mathrm{id}}

\newcommand{\D}{\mathcal{D}}



\newcommand{\Id}{\mathrm{Id}}
\newcommand{\spo}{\mathfrak{spo}}





\begin{document}
\keywords{Contact structure on superspaces, supergeometry, differential operators.}
\maketitle
\begin{abstract}
We study in this paper, the existence of tree types of filtrations of the space $\mathcal{D}_{\lambda\mu}(\R^{2l+1|n})$ of differential operators on the superspaces $\R^{2l+1|n}$ endowed with the standard contact structure $\alpha$. On this space $\mathcal{D}_{\lambda\mu}(\R^{2l+1|n})$, we have the first filtration called canonical and because of the existence of the contact structure on superspaces $\R^{2l+1|n}$ we obtain the second filtration on the space $\mathcal{D}_{\lambda\mu}(\R^{2l+1|n})$ called filtration of Heisenberg and thus the space $\mathcal{D}_{\lambda\mu}(\R^{2l+1|n})$ is therefore denoted by $\mathcal{H}_{\lambda\mu}(\R^{2l+1|n})$. We have also a new filtration induced on $\mathcal{D}_{\lambda\mu}(\R^{2l+1|n})$ by the two filtrations and it calls bifiltration. Explicitly, the space $\mathcal{D}_{\lambda\mu}(\R^{2l+1|n})$ of  differential operators is filtered canonically by the order of its differential operators and the order is $k\in \N$. When it is filtered by order of Heisenberg, the order of any differential operator is equal to $d\in\frac{1}{2}\N$.
This study is the generalization, in super case, of the model studied by C.H.Conley and V.Ovsienko in \cite{CoOv12}.
Finally, we show that the $\spo(2l+2|n)$-module structure on the space  $\mathcal{D}_{\lambda\mu}(\R^{2l+1|n})$ of differential operators is induced on the  space $\mathcal{H}_{\lambda\mu}(\R^{2l+1|n})$ and therefore on the associated space $\mathcal{S}_\delta(\R^{2l+1|n})$ of normal symbols, on the space $\mathcal{P}_\delta(\R^{2l+1|n})$ of symbols of Heisenberg and on the space of fine symbol $\Sigma_\delta(\R^{2l+1|n})$. 
\end{abstract}

\section{Introduction}
This paper is based on the concepts of contact supergeometry. We start by the standard contact structure $\alpha$ on the supermanifold $\R^{2l+1|n}$. This paper is in some way a generalization of those formulas known in classical geometry, as done by C.H. Conley and V. Ovsienko in \cite{CoOv12}. \\
In the second time we describe the concept of densities on the standard supermanifold $\R^{2l+1|n}$ in the classical way. We define two types of densities on $\R^{2l+1|n}$ and we show that if $d$ is a superdimension which different to $-1$, there is an isomorphism between the space of tensor densities $\mathrm{Ber}_{\frac{\lambda}{d+1}}(\R^{2l+1|n})$ and the space of contact densities $\mathcal{F}_\lambda(\R^{2l+1|n})$ where $\lambda\in\R$. These spaces $\mathrm{Ber}_{\frac{\lambda}{d+1}}(\R^{2l+1|n})$ and $\mathcal{F}_\lambda(\R^{2l+1|n})$ are isomorphic as $\mathcal{K}(2l+1|n)$-modules where $\mathcal{K}(2l+1|n)$ is the Lie superalgebra of contact vector fields on the contact supermanifold $\R^{2l+1|n}$.

The definition of differential operators which act between the $\lambda$-densities and $\mu$-densities, where $\lambda,\mu\in\R$, on the supermanifold $\R^{2l+1|n}$ proves the existence on the space $\mathcal{D}_{\lambda\mu}(\R^{2l+1|n})$ of two types of filtrations, i.e. the canonical filtration and the filtration of Heisenberg. These two filtrations induce another filtration called bifiltration. In this way, we generalize in super case, the model described in the even case by C.H. Conley and V. Ovsienko in \cite{CoOv12}.
We define the spaces $\mathcal{S}_\delta(\R^{2l+1|n})$ of symbol associated to the canonical filtration and the space of symbol of Heisenberg $\mathcal{P}_\delta(\R^{2l+1|n})$ associated to the filtration of Heisenberg and the bigraded space $\Sigma_\delta(\R^{2l+1|n})$ of fine symbols. In the last section, we show that those spaces are $\spo(2l+2|n)$-modules, where $\spo(2l+2|n)$ is the Lie sub-superalgebra of $\mathcal{K}(2l+1|n)$ constituted by the contact vector fields $X_f$ whose the superfunctions $f$ are degrees to most equal two. In order to facilitate the computations, we use the method used by C.H. Conley and V. Ovsienko in \cite{CoOv12}: we represent the symbols by the polynomials and the Lie derivatives by the differential operators. In this way, we see that the spaces $\mathcal{S}^k_\delta(\R^{2l+1|n})$ and $\mathcal{P}^d_\delta(\R^{2l+1|n})$ and $\Sigma^{k,d}_\delta(\R^{2l+1|n})$ are isomorphic to some sub-spaces of $$\mathcal{F}_\delta(\R^{2l+1|n})\otimes\mathrm{Pol}(T^*\R^{2l+1|n}).$$
We use the symbolical notation, i.e. the moments $\zeta,\alpha_i,\beta_i$ and $\gamma_i$ associated to the vector fields
$
 \partial_z,\quad A_i=\partial_{x_i}+y_i\partial_z,\quad B_i=-\partial_{y_i}+x_i\partial_z \quad\mbox{and}\; \bar{D}_i=\partial_{\theta_i}-\theta_i\partial_z 
$
respectively and compute the explicit formulas of the actions of $\spo(2l+2|n)\subset \mathcal{K}(2l+1|n)$ on the spaces $\mathcal{S}^k_\delta(\R^{2l+1|n})$ and $\mathcal{P}^d_\delta(\R^{2l+1|n})$ and $\Sigma^{k,d}_\delta(\R^{2l+1|n})$.

\section{Contact structure on $\R^{2l+1|n}$}
We consider the supermanifold $\R^{2l+1|n}$, where $l$ and $n$ are integers.
The standard contact structure on the supermanifold $\R^{2l+1|n}$ is defined by the kernel of the differential $1$-superforms $\alpha$ on $\R^{2l+1|n}$ which, in the system of Darboux coordinates $(z,x_i,y_i,\theta_j),\quad i=1,\cdots,l$ and $j=1,\cdots,n$ it can be written as 
\begin{equation}\label{STANDARDCONTACT}
\alpha=dz+\sum_{i=1}^l{(x_idy_i-y_idx_i)}+\sum_{i=1}^n{\theta_id\theta_i}.
\end{equation}
This differential $1$-superform $\alpha$ is called contact form on $\R^{2l+1|n}$ and we denote by $\mathrm{Tan}(\R^{2l+1|n})$ the space constituted of the elements $T_r,\quad 1\leqslant r\leqslant 2l+n$ of the kernel of $\alpha$. 

If we denote $q^A=(z,q^r)$ the generalized coordinate where 
\begin{equation}\label{gencoordin}
q^A=\left\lbrace 
\begin{array}{lcl}
 z\quad\mbox{if}\quad A=0\\
 x_A \quad\mbox{if}\quad 1\leqslant A\leqslant l,\\ 
 y_{A-l} \quad\mbox{if} \quad l+1\leqslant A\leqslant 2l\\ 
 \theta_{A-2l} \quad\mbox{if}\quad 2l+1\leqslant A\leqslant 2l+n 
\end{array}\right.
\end{equation}
we can write $\alpha$ in the following way
\[
\alpha=dz+\omega_{rs}q^rdq^s, \quad (\omega_{rs})=\left(
\begin{array}{cc|c}
0& \id_l&0\\
-\id_l&0&0\\
\hline
0&0&\id_n
\end{array}
\right).
\]
\begin{rmk}
We denote by $\omega^{sk}$ the elements of the matrix $(\omega^{sk})$ so that $(\omega_{rs})(\omega^{sk})=(\delta^k_r)$. We have thus
\[
(\omega^{rs})=\left(
\begin{array}{cc|c}
0&- \id_l&0\\
\id_l&0&0\\
\hline
0&0&\id_n
\end{array}
\right).
\] and $(\omega^{rs})=-(-1)^{\tilde{r}\tilde{s}}(\omega^{sr}).$
\end{rmk}
\begin{df}
We call the field of Reeb  on $\R^{2l+1|n}$, the vector field $T_0\in\mathrm{Vect}(\R^{2l+1|n})$ which, in the system of Darboux coordinates, one write 
$T_0=\partial_z$.
\end{df}
We can show that the field of Reeb is the unique vector field on $\R^{2l+1|n}$ so that $i(T_0)\alpha=1$ and $i(T_0)d\alpha=0$.\\
As proved in \cite{Nib15}, the vector fields $T_1,\cdots,T_{2l+n}\in \mathrm{Tan}\R^{2l+1|n}$, (i.e. the kernel of $\alpha$) are written explicitly as follow:
\begin{equation}\label{tangentdistr}
T_r=\left\lbrace 
\begin{array}{lcl} 
 A_r &:=& \partial_{x_r}+y_r\partial_z\quad\mbox{if}\quad 1\leqslant r\leqslant l\\ 
 -B_{r-l} &:=& \partial_{y_{r-l}}-x_{r-l}\partial_z\quad\mbox{if}\quad l+1\leqslant r\leqslant 2l\\ 
 \overline{D}_{r-2l}&:=& \partial_{\theta_{r-2l}}-\theta_{r-2l}\partial_z \quad\mbox{if}\quad 2l+1\leqslant r\leqslant 2l+n
\end{array}\right.
\end{equation}
If we write the vector field $T_r$ as follow $ T_r=\partial_{q^r}-\omega_{kr}q^k\partial_z$, then the following formulas are immediate.
\begin{equation}\label{TANGDIST}
T_r(q^k)=\delta_r^k,\quad T_r(z)=-\omega_{kr}q^k,\quad [T_r,T_j]=-2\omega_{rj}\partial_z,\quad T_r(z^2)=-2z\omega_{kr}q^k.
\end{equation}

\section{Module of densities on $\R^{2l+1|n}$}
In this section, we consider the spaces $C^\infty(\R^{2l+1|n})$ of superfunctions and the space $\mathrm{Vect}(\R^{2l+1|n})$ of all homogeneous vector fields on $\R^{2l+1|n}$.\\
We define an action $\mathbb{L}_X^\lambda$ of an element $X=\sum_AX^A\partial_{q^A}$ of $\mathrm{Vect}(\R^{2l+1|n})$ on the space of superfunctions $C^\infty(\R^{2l+1|n})$ as follow
\begin{equation}\label{Action}
\mathbb{L}_X^\lambda(g):=X(g)+\lambda \mathrm{div}(X)g,\forall g\in C^\infty(\R^{2l+1|n}),
\end{equation}
where $\mathrm{div}(X)=\sum_A(-1)^{\widetilde{X^A}\widetilde{q^A}}\partial_{q^A}X^A$.
We can show that for all $X,Y\in\mathrm{Vect}(\R^{2l+1|n}) $, we have
\[
[\mathbb{L}_X^\lambda,\mathbb{L}_Y^\lambda]=\mathbb{L}_{[X,Y]}^\lambda
\] 
\begin{df}
The module $\mathrm{Ber}_\lambda(\R^{2l+1|n})$ of tensor densities of weight $\lambda\in\R$ on $\R^{2l+1|n}$ is the space of superfunctions $C^\infty(\R^{2l+1|n})$ endowed with the action \eqref{Action} of $\mathrm{Vect}(\R^{2l+1|n})$. One can writes for $g\in C^\infty(\R^{2l+1|n})$, any $\lambda$-tensor density as $g|Dx|^\lambda$.
\end{df}
In particular case, when we consider a contact structure on $\R^{2l+1|n}$, we can define a sub space $\mathcal{K}(2l+1|n)$ of $\mathrm{Vect}(\R^{2l+1|n})$ constituted by the contact vector fields $X_f$ on $\R^{2l+1|n}$. Explicitly, we can show that the elements $X_f$ are expressed \cite{Nib15} by
\begin{equation}\label{CONTACTFORME}
X_f=f\partial_z-\frac{1}{2}(-1)^{\tilde{f}\tilde{T_r}}\omega^{rs}T_r(f)T_s,
\end{equation} where $\omega^{rs}$ denotes the elements of the matrix $(\omega^{rs})$  and $T_r$ denotes the elements of kernel of the standard contact form $\alpha$ on $\R^{2l+1|n}$ (see \eqref{STANDARDCONTACT} and \eqref{tangentdistr}).
In this way, we define an action of $X_f$ on $C^\infty(\R^{2l+1|n})$ by
\begin{equation}\label{action2}
L_{X_f}^\lambda(g)=X_f(g)+\lambda f'g,\quad \forall g\in C^\infty(\R^{2l+1|n})
\end{equation} and where $f'=\partial_zf$ and  where $(z,x^i,y^i,\theta^j),\quad i\in [1,l],\quad j\in[1,n]$ is the coordinates system of Darboux.
It is clear that
\[
[L_{X_f}^\lambda,L_{X_g}^\lambda]=L_{[X_f,X_g]}^\lambda=L_{X_{\{f,g\}}}^\lambda,
\]
where $\{f,g\}$ is the Lagrange bracket of the superfunctions $f$ and $g$.
\begin{df}
The module $\mathcal{F}_\lambda(\R^{2l+1|n})$ of $\lambda$-contact densities on $\R^{2l+1|n}$ is the space $C^\infty(\R^{2l+1|n})$ endowed with the action \eqref{action2} of $\mathcal{K}(2l+1|n)$. One can write any $\lambda$-contact density as $g\alpha^\lambda$ where $\alpha$ is a contact $1$-form on $\R^{2l+1|n}$ and for an arbitrary superfunction $g$.
\end{df}
We have the following result.
\begin{prop}\label{coucou}
If the superdimension $d=2l+1-n$ is different to $-1$, then the application 
\[
\varphi:\mathcal{F}_\lambda(M)\to\mathrm{Ber}_{\frac{2\lambda}{d+1}}(M):g\alpha^\lambda\mapsto g\vert Dx\vert^{\frac{2\lambda}{d+1}}
\] is an isomorphism of $\mathcal{K}(2l+1|n)$-modules.
\end{prop}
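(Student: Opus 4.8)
The map $\varphi$ is, on underlying superfunctions, nothing but the identity of $C^\infty(\R^{2l+1|n})$; it merely relabels the weight $\lambda$ of a contact density as the weight $\frac{2\lambda}{d+1}$ of a tensor density. Hence $\varphi$ is automatically a linear bijection, and the entire content of the statement is that it intertwines the two $\mathcal{K}(2l+1|n)$-actions. Writing $f'=\partial_z f$ and comparing the contact action \eqref{action2}, namely $L_{X_f}^\lambda(g)=X_f(g)+\lambda f' g$, with the tensorial action \eqref{Action} restricted to $X_f\in\mathcal{K}(2l+1|n)$, namely $\mathbb{L}_{X_f}^{\frac{2\lambda}{d+1}}(g)=X_f(g)+\frac{2\lambda}{d+1}\,\mathrm{div}(X_f)\,g$, one sees that $\varphi$ is equivariant if and only if
\[
\lambda\,f'=\frac{2\lambda}{d+1}\,\mathrm{div}(X_f)\qquad\text{for all }f,
\]
which (taking $\lambda\neq 0$) amounts to the single identity $\mathrm{div}(X_f)=\frac{d+1}{2}\,f'$. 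Note that the hypothesis $d\neq-1$ is exactly what makes the target weight $\frac{2\lambda}{d+1}$, and hence $\varphi$ itself, well defined.

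The plan therefore reduces to proving this divergence formula, which is where the real work lies. First I would use \eqref{CONTACTFORME} together with $T_s=\partial_{q^s}-\omega_{ks}q^k\partial_z$ to express $X_f$ in the coordinate frame $\{\partial_z,\partial_{q^s}\}$: the $\partial_{q^s}$-component is $X^s=-\tfrac12(-1)^{\tilde f\tilde r}\omega^{rs}T_r(f)$, while the $\partial_z$-component is $X^0=f$ plus a correction coming from the piece $-\omega_{ks}q^k\partial_z$ of $T_s$. I would then apply the divergence formula $\mathrm{div}(X)=\sum_A(-1)^{\widetilde{X^A}\widetilde{q^A}}\partial_{q^A}X^A$ of \eqref{Action}, using that $\partial_z$ commutes with each $T_r$ and that $\omega^{rs}$ is nonzero only when $\tilde r=\tilde s$.

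The heart of the computation, and the main obstacle, is to carry this out while respecting the Koszul signs. Substituting $\partial_{q^s}=T_s+\omega_{ks}q^k\partial_z$ when differentiating the components, one checks that every genuinely second-order term cancels: the pure part $\omega^{rs}\partial_{q^s}\partial_{q^r}f$ vanishes because $\partial_{q^s}\partial_{q^r}$ is graded-symmetric whereas $(\omega^{rs})=-(-1)^{\tilde r\tilde s}(\omega^{sr})$ is graded-antisymmetric, and the coordinate-weighted second-order terms arising from $\partial_z(X^0)$ and from $\sum_s\partial_{q^s}(X^s)$ cancel pairwise, exactly as the mixed partials canceled in the purely even model. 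What survives is first order and comes entirely from the explicit coordinate derivative $\partial_{q^s}(\omega_{kr}q^k)=\omega_{sr}$ inside $T_r(f)$, which after contraction with $-\tfrac12(-1)^{\tilde f\tilde r}\omega^{rs}$ produces a graded trace. Concretely the Reeb term $f\partial_z$ contributes $1\cdot f'$, the $2l$ even tangent directions contribute $l\cdot f'$, and the $n$ odd tangent directions contribute $-\tfrac n2 f'$, this sign reversal being precisely the supertrace effect $\tfrac12\,\mathrm{str}=\tfrac12(2l-n)$ that distinguishes the super case from the even one. Summing gives $\mathrm{div}(X_f)=(1+l-\tfrac n2)f'=\tfrac{d+1}{2}f'$, which is the required identity and completes the proof. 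The one genuinely delicate point throughout is the consistent bookkeeping of the Koszul signs.
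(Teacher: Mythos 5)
Your argument is correct and follows essentially the same route as the paper: both reduce the equivariance of $\varphi$ to the single identity $\mathrm{div}(X_f)=\frac{2l+2-n}{2}f'=\frac{d+1}{2}f'$, the bijectivity being immediate. The only difference is that the paper simply asserts this divergence formula without derivation, whereas you sketch the coordinate computation behind it; your final count $\bigl(1+l-\tfrac{n}{2}\bigr)f'$ agrees with the paper's coefficient.
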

\begin{proof}
It is clear that $\varphi$ is bijective. We must show that the application $\varphi$ intertwines the action of $X_f$ on the spaces $\mathcal{F}_\lambda(\R^{2l+1|n})$ and $\mathrm{Ber}_{\frac{2\lambda}{d+1}}(\R^{2l+1|n})$, i.e.
\begin{equation}\label{IntertwAction}
\varphi(L^\lambda_{X_f}(g\alpha^\lambda))=\mathbb{L}^\frac{2\lambda}{d+1}_{X_f}(\varphi(g\alpha^\lambda)).
\end{equation}
Indeed, on the space $\mathcal{F}_\lambda(\R^{2l+1|n})$, the action of $\mathcal{K}(2l+1|n)$ is  given by 
\begin{equation}\label{LIEACTION}
L^\lambda_{X_f}(g\alpha^\lambda)= (X_f(g)+\lambda f'g)\alpha^\lambda=\left(X_f(g)+\lambda\frac{2\mathrm{Div}X_f}{d+1}g\right)\alpha^\lambda
\end{equation} where $\mathrm{div}(X_f)=\frac{2l+2-n}{2}f'$.
The second member of \eqref{IntertwAction} can be written as
\begin{equation}\label{FormulLieDeriv}
\left(X_f(g)+\frac{2\lambda}{d+1}\mathrm{div}(X_f)g\right)|Dx|^{\frac{2\lambda}{d+1}}.
\end{equation} 
If we apply the isomorphism $\varphi$ to the density given by \eqref{LIEACTION}, we obtain also \eqref{FormulLieDeriv}.
\end{proof}

In particular, if we denote by $T\R^{2l+1|n}$ the supertangent sheaf on $\R^{2l+1|n}$, we can define the application
\[
X. :\mathcal{F}_{-1}(\R^{2l+1|n})\to T\R^{2l+1|n}:f\alpha^{-1}\mapsto X_f
\] and this application establishes the intertwine of the representations $\mathcal{F}_{-1}(\R^{2l+1|n})$ and $T\R^{2l+1|n}$ of $\mathcal{K}(2l+1|n)$.

 If we denote by $\mathrm{Tan}\R^{2l+1|n}$ the space of all vector fields on $\R^{2l+1|n}$ which preserve the contact structure, we can also show that the space $T\R^{2l+1|n}$ is the direct sum of two spaces as follow
\[
T\R^{2l+1|n}=\mathrm{Tan}\R^{2l+1|n}\oplus\mathcal{K}(2l+1|n).
\]
The space $\mathrm{Tan}\R^{2l+1|n}$ is an $C^\infty(\R^{2l+1|n})$-module but is not a Lie superalgebra and the space $\mathcal{K}(2l+1|n)$ is not a module but it possess a structure of Lie superalgebra.

\section{Module of differential operators on $\R^{2l+1|n}$}

We begin here by the classical definitions of differential operators between spaces of densities.
\begin{df}
If we denote $(q^A)=(z,x^i,y^i,\theta^j)$, $(i\in [1,l],\quad j\in[1,n])$ the coordinates system of Darboux on $\R^{2l+1|n}$, we call differential operator $D$ of order $k$ on $\R^{2l+1|n}$, the application which maps on $\mathcal{F}_\lambda(\R^{2l+1|n})$ to $\mathcal{F}_\mu(\R^{2l+1|n})$ and it can be written in coordinates by
\[D:f\alpha^{\lambda}\mapsto(\sum_{I:|I|\leq k}{D_I\partial_{q^I}f)\alpha^{\mu}  
},\]
where $I=(i_0,i_1,\ldots,i_{2l+n})$ is a multi-index, $|I|=i_{0}+\ldots+i_{2l+n}$
 and $D_I$ is a superfunction for all $I$.
\end{df}

More explicitly, a differential operator $D$ can be written in coordinates by
 \begin{equation}\label{opdiff}
 \sum_{I:|I|\leq k}{D_I(\partial_{z})^{i_0}(\partial_{x_1})^{i_1}\ldots(\partial_{x_l})^{i_l}(\partial_{y_1})^{i_{l+1}}\cdots(\partial_{y_l})^{i_{2l}}(\partial_{\theta_1})^{i_{2l+1}}\ldots(\partial_{\theta_n})^{i_{2l+n}}}.
 \end{equation}
Because $\partial_{\theta_i}^2=0$, the exponents $i_{2l+1},\ldots,i_{2l+n}$ in the expression \eqref{opdiff} are at most equal to $1$.

The differential operators of order $0$ are simply the multiplication by $(\mu-\lambda)$-densities.
We define the space of differential operators as follow
  \[
\mathcal{D}_{\lambda\mu}(\R^{2l+1|n})=\bigcup_{k=0}^\infty\mathcal{D}_{\lambda\mu}^k(\R^{2l+1|n}).
\]
If $D\in \mathcal{D}_{\lambda\mu}(\R^{2l+1|n})$ and if $X\in\mathcal{K}(2l+1|n)$, then the action of $X$ on $\mathcal{D}_{\lambda\mu}(\R^{2l+1|n})$ is defined by the Lie derivative $\mathcal{L}_X^{\lambda\mu}$ via the following supercommutator  

   \begin{equation}\label{optoraction}
 \mathcal{L}_X^{\lambda\mu}D=L_X^{\mu}\circ D-(-1)^{\tilde{X}\tilde{D}}D\circ L_X^{\lambda}
 \end{equation} where $L_X^\lambda$ and $L_X^\mu$ are defined in \eqref{action2}.
  We have thus a  $\mathcal{K}(2l+1|n)$-module structure on the space $\mathcal{D}_{\lambda\mu}(\R^{2l+1|n})$. 
  
  \begin{rmk}
   The construction given above can be done with the representations $\mathrm{Ber}_\lambda(\R^{2l+1|n})$ and $\mathrm{Ber}_\mu(\R^{2l+1|n})$ of $\mathrm{Vect}(\R^{2l+1|n})$. We obtain the representation of $\mathrm{Vect}(\R^{2l+1|n})$ on the space of differential operators and it induces the representation of $\mathcal{K}(2l+1|n)$. The proposition \ref{coucou} allows us to show that these representations of $\mathcal{K}(2l+1|n)$ are isomorphic, modulo a change of an adequate weight.
  \end{rmk}

\section{Canonical filtration of $\mathcal{D}_{\lambda\mu}(\R^{2l+1|n})$}
Firstly, we show that the space $\mathcal{D}_{\lambda\mu}(\R^{2l+1|n})$ has the canonical filtration. Secondly, we show that the action of $\mathcal{K}(2l+1|n)$ on 
$\mathcal{D}_{\lambda\mu}(\R^{2l+1|n})$ induces an action on the associated graded space $\mathcal{S}_\delta(\R^{2l+1|n})$ that we call the space of symbols on $\R^{2l+1|n}$.
The subspaces $\mathcal{D}^l_{\lambda\mu}(\R^{2l+1|n})$ are stable under the action of $\mathcal{K}(2l+1|n)$.
\begin{prop}
If $X\in \mathcal{D}^k_{\lambda\mu}(\R^{2l+1|n})$ and if $X\in\mathcal{K}(2l+1|n)$ then $\mathcal{L}_X^{\lambda\mu}(D)\in\mathcal{D}^k_{\lambda\mu}(\R^{2l+1|n})$.
\end{prop}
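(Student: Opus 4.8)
The plan is to show that the operator $\mathcal{L}_X^{\lambda\mu}(D)$ produced by \eqref{optoraction} cannot exceed the order of $D$, by isolating the potentially dangerous top-order contribution and proving it cancels. Assume $D\in\mathcal{D}^k_{\lambda\mu}(\R^{2l+1|n})$ and $X=X_f\in\mathcal{K}(2l+1|n)$. The first step is a reduction. By \eqref{action2} each of the operators $L_X^\mu$ and $L_X^\lambda$ is the sum of the \emph{same} first-order part, namely the contact vector field $X_f$, and a zeroth-order multiplication operator, $\mu f'$ respectively $\lambda f'$. Substituting $L_X^\mu=X_f+\mu f'$ and $L_X^\lambda=X_f+\lambda f'$ into \eqref{optoraction} and regrouping, I would write
\[
\mathcal{L}_X^{\lambda\mu}(D)=\bigl(X_f\circ D-(-1)^{\tilde X\tilde D}D\circ X_f\bigr)+\bigl(\mu\, f'\circ D-(-1)^{\tilde X\tilde D}\lambda\, D\circ f'\bigr).
\]
The second parenthesis only couples $D$ with the zeroth-order (multiplication) operator $f'$, so both $f'\circ D$ and $D\circ f'$ are differential operators of order at most $k$; this term therefore lies in $\mathcal{D}^k_{\lambda\mu}(\R^{2l+1|n})$ automatically. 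It thus suffices to control the graded commutator in the first parenthesis.

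The heart of the matter is to prove that the super-commutator of the first-order operator $X_f$ with the $k$-th order operator $D$ is again of order at most $k$. I would establish this concretely in Darboux coordinates. Writing $D=\sum_{|I|\le k}D_I\,\partial_{q^I}$ and $X_f=\sum_A X^A\partial_{q^A}$, the composition $X_f\circ D$ is expanded using the graded Leibniz rule. The only contributions of order $k+1$ are those in which $\partial_{q^A}$ passes through the coefficient $D_I$ and lands directly on $\partial_{q^I}$, giving $\sum_{A,I}X^A D_I\,\partial_{q^A}\partial_{q^I}$; every remaining term is produced by some $\partial_{q^A}$ differentiating a coefficient and so drops the order. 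The composition $D\circ X_f$ has the symmetric structure, its order-$(k+1)$ part being the reordering $\sum_{A,I}(\pm)D_I X^A\,\partial_{q^I}\partial_{q^A}$. Because the partial derivatives $\partial_{q^A}$ and $\partial_{q^I}$ (anti)commute, these two order-$(k+1)$ pieces coincide up to the Koszul sign, which is precisely the sign $(-1)^{\tilde X\tilde D}$ appearing in the commutator; hence they cancel, and the graded commutator $X_f\circ D-(-1)^{\tilde X\tilde D}D\circ X_f$ has order at most $k$. Combined with the first step, this shows $\mathcal{L}_X^{\lambda\mu}(D)\in\mathcal{D}^k_{\lambda\mu}(\R^{2l+1|n})$.

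The main obstacle I anticipate is the sign bookkeeping in the $\Z_2$-graded setting rather than any conceptual difficulty. The parity of $X_f$ equals $\tilde f$, the odd derivatives $\partial_{\theta_j}$ anticommute among themselves, and every use of the Leibniz rule and of the reordering of partials carries a Koszul sign; one must check that the top-order terms cancel with \emph{exactly} the sign $(-1)^{\tilde X\tilde D}$ dictated by \eqref{optoraction}. To keep this under control I would first verify the cancellation on the generators $\partial_{q^A}$, where the signs are transparent, and then extend to arbitrary $D\in\mathcal{D}^k_{\lambda\mu}(\R^{2l+1|n})$ by the graded-Leibniz property of composition. An equivalent, coordinate-free route is to invoke that composition makes differential operators a filtered algebra with supercommutative associated graded, so that $[\,\Diff^1,\Diff^k\,]\subseteq\Diff^k$; I would use the explicit computation above precisely to justify this supercommutativity in the present super contact setting.
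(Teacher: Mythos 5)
Your proposal is correct and follows essentially the same route as the paper: the paper's (much terser) proof likewise observes that the order-$(k+1)$ terms of $L_X^{\mu}\circ D$ and $D\circ L_X^{\lambda}$ coincide and therefore cancel in the supercommutator \eqref{optoraction}. Your version merely makes explicit the two points the paper leaves implicit — that the zeroth-order pieces $\mu f'$ and $\lambda f'$ cannot raise the order, and that the cancellation of the top symbols rests on the supercommutativity of the $\partial_{q^A}$ together with the Koszul sign $(-1)^{\tilde X\tilde D}$.
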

\begin{proof}
We compute the terms of order $k+1$ in the expression of $\mathcal{L}_X^{\lambda\mu}(D)$. Because of definition of the action of $X$, we must evaluate the terms of order $k+1$ of $L^\mu_X\circ D$ and $D\circ L_X^\lambda$. We remark that these terms are identical. Therefore, $\mathcal{L}_X^{\lambda\mu}(D)$ is a  differential operator of order $k$.
\end{proof}
It is easy to see that the following inclusions are immediate
\[
\mathcal{D}_{\lambda\mu}^0(\R^{2l+1|n}) \subset\mathcal{D}_{\lambda\mu}^1(\R^{2l+1|n})\subset\mathcal{D}_{\lambda\mu}^2(\R^{2l+1|n})\subset\cdots\subset
\mathcal{D}_{\lambda\mu}^{l-1}(\R^{2l+1|n})\subset\mathcal{D}_{\lambda\mu}^l(\R^{2l+1|n})\subset\cdots,
 \]
and therefore we deduce that the spaces $\mathcal{D}_{\lambda\mu}^l(\R^{2l+1|n})$ define a filtration of the module of differential operators $\mathcal{D}_{\lambda\mu}(\R^{2l+1|n})$.
\begin{rmk}
If $\lambda=\mu$, the space $\mathcal{D}_{\lambda\lambda}(\R^{2l+1|n})$ is an associative and filtered superalgebra by composition of differential operators. It is therefore a filtration of algebra, i.e. 
\[
  \mathcal{D}^k_{\lambda\lambda}(\R^{2l+1|n}).\mathcal{D}^l_{\lambda\lambda}(\R^{2l+1|n})\subseteq\mathcal{D}^{k+l}_{\lambda\lambda}(\R^{2l+1|n}).
  \]
\end{rmk}

We can now define the graded space which is associated to this filtration of $\mathcal{D}^k_{\lambda\lambda}(\R^{2l+1|n})$.
\begin{df}
The space $\mathcal{S}_\delta(\R^{2l+1|n})$ is called space of principal symbols of order $k$ and defined by 
 \[\mathcal{S}^k_\delta(\R^{2l+1|n}):=\D^k_{\lambda\mu}(\R^{2l+1|n})/\D^{k-1}_{\lambda\mu}(\R^{2l+1|n}),\quad \delta=\mu-\lambda.\]
We define on $\D^k_{\lambda\mu}(\R^{2l+1|n})$ the following surjective application $\sigma_k$ 
\begin{equation}
  \sigma_k:\D^k_{\lambda\mu}(\R^{2l+1|n})\to\mathcal{S}^k_\delta(\R^{2l+1|n}):D\mapsto [D],
\end{equation} 
where $[D]$ means the equivalence class of $D$.
\end{df}
The action $L_X^\delta$ of $\mathcal{K}(2l+1|n)$ on $\mathcal{S}^k_\delta(\R^{2l+1|n})$ is induced by the action of $\mathcal{K}(2l+1|n)$ on $\D^k_{\lambda\mu}(\R^{2l+1|n})$, i.e. if $S=[D]$ where $D$ is given by the formulae \eqref{opdiff}, then
we have
 \[
 L^\delta_X(S):= [\mathcal{L}_X^{\lambda\mu}(D)].
 \]
 \begin{rmk}
 
 The previous constructions are also valid for the representations of $\mathrm{Vect}(\R^{2l+1|n})$ if we consider the spaces $\mathrm{Ber}_\lambda(\R^{2l+1|n})$ instead of the spaces $\mathcal{F}_\lambda(\R^{2l+1|n})$. The induced representations of $\mathcal{K}(2l+1|n)$ are isomorphic to those presented here, modulo a change of an adequate weight.
 \end{rmk}

\section{Filtration of Heisenberg of $\mathcal{D}_{\lambda\mu}(\R^{2l+1|n})$}
If the superspace $\R^{2l+1|n}$ is endowed with the standard contact form 
$\alpha$ then the superderivations on $C^{\infty}(\R^{2l+1|n})$ are generated by the field of Reeb $\partial_z$ and by the vector fields $T_1,\cdots,T_{2l+n}\in \mathrm{Tan}\R^{2l+1|n}$.
We can therefore define on the space $\D_{\lambda\mu}(\R^{2l+1|n})$ of differential operators another filtration and naturally another type of symbols.

\begin{prop}
If $K=(i_1,\ldots,i_{2l+n})$ is a multi-index of length $|K|=i_1+i_2+\ldots+i_{2l+n}$ and if $D$ is a differential operator of order $k$, then $D$ can be written in the follow unique form:
\begin{equation}\label{NORMALOPDIFF}
\sum_{K:c+|K|\leqslant k}D_{cK}\partial_{z}^{c}T^K,
\end{equation}
where $D_{cK}$ is a superfunction and $T^K=T^{i_1}_1\cdots T^{i_{2l+n}}_{2l+n}$.
\end{prop}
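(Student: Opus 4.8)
The plan is to treat $\{\partial_z = T_0, T_1, \dots, T_{2l+n}\}$ as an alternative frame of derivations and to establish \eqref{NORMALOPDIFF} as a Poincaré–Birkhoff–Witt–type normal form for this frame. First I would invert the relations \eqref{tangentdistr}: since $T_r = \partial_{q^r} - \omega_{kr} q^k \partial_z$, one obtains $\partial_{q^r} = T_r + \omega_{kr} q^k \partial_z$, so every standard partial derivative is a left-$C^\infty(\R^{2l+1|n})$-combination of $\partial_z$ and the $T_r$. Consequently the canonical expression \eqref{opdiff} exhibits $D$ as an element of the associative algebra generated over $C^\infty(\R^{2l+1|n})$ by $\partial_z$ and $T_1, \dots, T_{2l+n}$; that is, $D$ is a noncommutative polynomial in these generators with superfunction coefficients, and each generator contributes exactly one unit to the canonical order.

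For existence I would reduce such a polynomial to the asserted form by a terminating rewriting governed by three moves. The first is the super-Leibniz rule $T_r g = (-1)^{\tilde g \tilde{T_r}} g\, T_r + T_r(g)$ (and likewise for $\partial_z$), used to push all superfunction coefficients to the left. The second is reordering: from \eqref{TANGDIST} one has $[T_r, T_j] = -2\omega_{rj}\partial_z$, and since $\partial_z$ commutes with each $\partial_{q^r}$ and with the coordinates $q^k$ for $k \geq 1$, the Reeb field is central, $[\partial_z, T_r] = 0$. Hence $T_r T_j = (-1)^{\tilde r \tilde j} T_j T_r - 2\omega_{rj}\partial_z$, so every interchange of two out-of-order factors replaces a degree-two monomial by a multiple of $\partial_z$ of degree one; the total number of factors never increases, which controls the canonical order and guarantees $c + |K| \leq k$. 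The third move treats repeated odd factors: for $2l+1 \leq r \leq 2l+n$ the super-bracket gives $2T_r^2 = [T_r, T_r] = -2\omega_{rr}\partial_z = -2\partial_z$, i.e. $T_r^2 = -\partial_z$, so any odd $T_r$ appearing to a power $\geq 2$ is reduced, leaving the exponents $i_{2l+1}, \dots, i_{2l+n}$ at most $1$, exactly as for the canonical form. As each move strictly decreases a suitable quantity (the number of out-of-order inversions, then the multidegree), the procedure terminates and yields \eqref{NORMALOPDIFF}.

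For uniqueness I would pass to principal symbols. The principal symbol of $T_r$ is $\xi_r = \eta_r - \omega_{kr} q^k \zeta$, where $\eta_r$ and $\zeta$ denote the symbols of $\partial_{q^r}$ and $\partial_z$; the linear change $(\zeta, \eta_r) \mapsto (\zeta, \xi_r)$ is invertible over $C^\infty(\R^{2l+1|n})$, so the monomials $\zeta^c \xi^K$ remain linearly independent over $C^\infty(\R^{2l+1|n})$ in the symbol algebra. Therefore, if a normal-form combination $\sum_{c+|K|\leq k} D_{cK}\partial_z^c T^K$ vanishes, its top-order part ($c + |K| = k$) has principal symbol $\sum_{c+|K|=k} D_{cK}\zeta^c\xi^K = 0$, which forces $D_{cK} = 0$ for those terms; an induction downward on the order then annihilates every coefficient, giving uniqueness.

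The main obstacle I anticipate is not either half in isolation but the bookkeeping that couples them in the super setting: tracking the Koszul signs through the super-Leibniz and super-commutator moves so that the rewriting is confluent, and confirming that the odd moments $\gamma_i$ (which square to zero in the supercommutative symbol algebra) do not produce hidden relations among the $\zeta^c\xi^K$. Controlling these signs, together with the interplay of the identity $T_r^2 = -\partial_z$ with the reordering, is precisely what makes the normal form both well-defined and unique.
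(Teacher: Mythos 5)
Your proof is correct and rests on the same underlying idea as the paper's (which is only a two-line sketch): re-express $D$ in the frame $\{\partial_z,T_1,\dots,T_{2l+n}\}$ arising from the splitting of the tangent sheaf into the contact distribution and the Reeb direction, and deduce uniqueness from the explicit form of the $T_r$. Your version supplies the details the paper omits --- the inversion $\partial_{q^r}=T_r+\omega_{kr}q^k\partial_z$, the reordering relations $T_rT_j=(-1)^{\tilde{r}\tilde{j}}T_jT_r-2\omega_{rj}\partial_z$ and $T_r^2=-\partial_z$ for odd $r$ (which keep $c+|K|\leq k$ and the odd exponents at most $1$), and the principal-symbol argument for uniqueness --- and each of these steps checks out against the identities \eqref{tangentdistr} and \eqref{TANGDIST}.
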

\begin{proof}
The existence is proven by the decomposition \[
T\R^{2l+1|n}=\mathrm{Tan}\R^{2l+1|n}\oplus\mathcal{K}(2l+1|n).
\] and because of the explicit form of vector fields $T_i$ we prove that the form is unique.
\end{proof}
\begin{df}
If $D$ is a differential operator of the form \eqref{NORMALOPDIFF}, then we say that $D$ has an order of Heisenberg equal to $d$ if $c+\frac{1}{2}|K|\leqslant d$ for all $c,K$. We denote by $\mathcal{H}^d_{\lambda\mu}(\R^{2l+1|n})$ the space of differential operators of order of Heisenberg equal to $d$ on $\R^{2l+1|n}$.
\end{df}
We can see that this space of differential operators is therefore filtered by the order of Heisenberg. Indeed, the total space $\mathcal{H}_{\lambda\mu}(\R^{2l+1|n})$ is the union of the spaces $\mathcal{H}^d_{\lambda\mu}(\R^{2l+1|n})$, i.e.
 \[
\mathcal{H}_{\lambda\mu}(\R^{2l+1|n})=\bigcup_{d\in\frac{1}{2}\N}\mathcal{H}^d_{\lambda\mu}(\R^{2l+1|n}). 
 \]
 Because $\mathcal{H}^d_{\lambda\mu}(\R^{2l+1|n})\subset\mathcal{H}^{d+\frac{1}{2}}_{\lambda\mu}(\R^{2l+1|n})$, we have the following inclusions 

\[
 \mathcal{H}^0_{\lambda\mu}(\R^{2l+1|n})\subset\mathcal{H}^{\frac{1}{2}}_{\lambda\mu}(\R^{2l+1|n})\subset\mathcal{H}^1_{\lambda\mu}(\R^{2l+1|n})\subset\cdots\subset\mathcal{H}^d_{\lambda\mu}(\R^{2l+1|n})\subset\mathcal{H}^{d+\frac{1}{2}}_{\lambda\mu}(\R^{2l+1|n})\subset\cdots
 \] for all $d\in\frac{1}{2}\N$.

\begin{df}
The graded space associated to the space $\mathcal{H}^d_{\lambda\mu}(\R^{2l+1|n})$
is denoted by $\mathcal{P}_\delta(\R^{2l+1|n})$. We have therefore
\begin{equation}\label{PD}
\mathcal{P}_{\delta}(\R^{2l+1|n}):=\bigoplus_{d\in\frac{1}{2}\N} \mathcal{P}_{\delta}^{d}(\R^{2l+1|n}):=\sum_{d\in\frac{1}{2}\N} \mathcal{H}_{\lambda\mu}^{d}(\R^{2l+1|n})/\mathcal{H}_{\lambda\mu}^{d-\frac{1}{2}}(\R^{2l+1|n}),
\end{equation} where $\delta=\mu-\lambda$.
\end{df}

The canonical projection define the application $h\sigma$ called Heisenberg symbol map as follow
\[{\rm h}\sigma:\mathcal{H}_{\lambda\mu}^{d}(M)\to\mathcal{P}_{\delta}^{d}(M):D\mapsto [D],\]
where $[D]$ means the equivalence class of $D$ in the quotient $\mathcal{H}_{\lambda\mu}^{d}(\R^{2l+1|n})/\mathcal{H}_{\lambda\mu}^{d-\frac{1}{2}}(\R^{2l+1|n})$.
\section{Bifiltration of $\mathcal{D}_{\lambda\mu}(\R^{2l+1|n})$ and the associated bigraded space $\Sigma_\delta(\R^{2l+1|n})$ }

In this section, we show that the canonical filtration and the filtration of Heisenberg induce a particular filtration called bifiltration of $\mathcal{D}_{\lambda\mu}(\R^{2l+1|n})$. We generalize in super case, the model used by C.Conley and V.Ovsienko in \cite{CoOv12} in the even case.

\begin{df}
We define a bifiltration on $\mathcal{D}_{\lambda\mu}(\R^{2l+1|n})$ by
\[
\mathcal{D}_{\lambda\mu}^{k,d}(\R^{2l+1|n}):=\mathcal{D}_{\lambda\mu}^{k}(\R^{2l+1|n})\cap\mathcal{H}_{\lambda\mu}^{d}(\R^{2l+1|n}). 
\] 
The bigraded space $\Sigma_\delta(\R^{2l+1|n})$ associated to the bifiltration $\mathcal{D}_{\lambda\mu}^{k,d}(\R^{2l+1|n})$ is defined by
\begin{equation}\label{BIGRADED}
\Sigma_{\delta}(\R^{2l+1|n})=\bigoplus_{k=0}^\infty\bigoplus_{d\in\frac{1}{2}\N}\Sigma_{\delta}^{k,d}(\R^{2l+1|n})=\bigoplus_{k=0}^\infty\bigoplus_{d\in\frac{1}{2}\N} \mathcal{D}_{\lambda\mu}^{k,d}(\R^{2l+1|n})/(\mathcal{D}_{\lambda\mu}^{k-1,d}(\R^{2l+1|n})+\mathcal{D}_{\lambda\mu}^{k,d-\frac{1}{2}}(\R^{2l+1|n})).
\end{equation}
The elements of $\Sigma_{\delta}(\R^{2l+1|n})$ are called fine symbols.
\end{df}
We define accordingly the fine symbol map by
\[{\rm f}\sigma_{k,d}:\mathcal{D}_{\lambda\mu}^{k,d}(\R^{2l+1|n})\to\Sigma_{\delta}^{k,d}(\R^{2l+1|n}):D\mapsto [D],\] where the bracket means the equivalence class of $D$
in $\mathcal{D}_{\lambda\mu}^{k,d}(\R^{2l+1|n})/(\mathcal{D}_{\lambda\mu}^{k-1,d}(\R^{2l+1|n})+\mathcal{D}_{\lambda\mu}^{k,d-\frac{1}{2}}(\R^{2l+1|n}))$.
To justify the terminology of fine symbol, we refer to \cite{CoOv12}.
\begin{rmk}
By definition of action, the applications ${\rm f}\sigma_{k,d}$ and $\sigma_k$ are $\mathcal{K}(2l+1|n)$-equivariant, i.e.
\[
L_{X_f}^{\delta,\mathcal{S}}\circ\sigma_k=\sigma_k\circ\mathcal{L}_{X_f}^{\lambda\mu}\quad\mbox{on}\;\;\mathcal{D}_{\lambda\mu}^k(\R^{2l+1|n}),
\] and
\[
L_{X_f}^{\delta,\Sigma}\circ {\rm f}\sigma_{k,d}={\rm f}\sigma_{k,d}\circ \mathcal{L}_{X_f}^{\lambda\mu}\quad\mbox{on}\;\; \mathcal{D}^{k,d}_{\lambda\mu}(\R^{2l+1|n}),
\] where $L_{X_f}^{\delta,\mathcal{S}}$ and $L_{X_f}^{\delta,\Sigma}$ denote respectively the action $L_{X_f}^{\delta}$ on $\mathcal{S}(\R^{2l+1|n})$ and $L_{X_f}^{\delta}$ on $\Sigma(\R^{2l+1|n})$.
\end{rmk}
We have thus the following main result
\begin{prop}
The action of $\mathcal{K}(2l+1|n)$ preserve the filtrations of $\D_{\lambda\mu}(\R^{2l+1|n})$, $\mathcal{H}_{\lambda\mu}(\R^{2l+1|n})$ and $ \mathcal{D}^{k,d}_{\lambda\mu}(\R^{2l+1|n})$.
\end{prop}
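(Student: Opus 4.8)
The plan is to prove the three invariance assertions in turn and then to notice that they combine. For the canonical filtration the required inclusion $\mathcal{L}_{X_f}^{\lambda\mu}(\D^k_{\lambda\mu}(\R^{2l+1|n}))\subseteq\D^k_{\lambda\mu}(\R^{2l+1|n})$ is precisely the proposition already established for the canonical filtration, so I would invoke it directly. The bifiltration will then require no separate argument: since $\mathcal{D}^{k,d}_{\lambda\mu}(\R^{2l+1|n})=\D^k_{\lambda\mu}(\R^{2l+1|n})\cap\mathcal{H}^d_{\lambda\mu}(\R^{2l+1|n})$ is the intersection of two $\mathcal{K}(2l+1|n)$-invariant subspaces, it is automatically invariant. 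Hence the whole statement reduces to proving that the Heisenberg filtration is preserved, i.e. $\mathcal{L}_{X_f}^{\lambda\mu}(\mathcal{H}^d_{\lambda\mu}(\R^{2l+1|n}))\subseteq\mathcal{H}^d_{\lambda\mu}(\R^{2l+1|n})$ for each $X_f\in\mathcal{K}(2l+1|n)$.

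To handle the Heisenberg case I would first rewrite the action \eqref{optoraction} so as to isolate its dangerous part. Writing $L_{X_f}^{\nu}=X_f+\nu f'$ from \eqref{action2}, where multiplication by $\nu f'$ is a differential operator of Heisenberg order $0$, expanding \eqref{optoraction} yields
\[
\mathcal{L}_{X_f}^{\lambda\mu}(D)=\big(X_f\circ D-(-1)^{\tilde{X}\tilde{D}}D\circ X_f\big)+\mu f'\,D-(-1)^{\tilde{X}\tilde{D}}\lambda\,D\,f'.
\]
The last two summands are obtained from $D$ by left and right composition with an order-$0$ multiplication operator, which never raises the Heisenberg order, so they stay in $\mathcal{H}^d_{\lambda\mu}(\R^{2l+1|n})$. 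It thus remains to show that the operator supercommutator $\mathrm{ad}_{X_f}(D):=X_f\circ D-(-1)^{\tilde{X}\tilde{D}}D\circ X_f$ preserves the Heisenberg filtration.

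Here I would use that $\mathrm{ad}_{X_f}$ is a superderivation of the associative algebra of differential operators, so it is enough to evaluate it on the generators $C^\infty(\R^{2l+1|n})$, $\partial_z$ and $T_1,\dots,T_{2l+n}$ and to check that each generator is sent to an operator of Heisenberg order no greater than its own (that is $0$, $1$ and $\tfrac12$); the Leibniz rule would then guarantee that $\mathrm{ad}_{X_f}$ preserves the total Heisenberg order of every normal-form monomial $D_{cK}\partial_z^{c}T^K$ of \eqref{NORMALOPDIFF}. On a superfunction $g$ one has $\mathrm{ad}_{X_f}(g)=X_f(g)$, of order $0$; on the Reeb field a short computation from \eqref{CONTACTFORME} gives $[X_f,\partial_z]=-X_{f'}$, again a contact field and so of order $1$. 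The delicate case is $[X_f,T_r]$, where the subterm $[f\partial_z,T_r]$ a priori produces the order-$1$ contribution $-(-1)^{\tilde f\tilde{T_r}}T_r(f)\partial_z$, which would push $T_r$ from order $\tfrac12$ up to order $1$.

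The crux of the proof, and the step I expect to be the main obstacle, is to show that this Reeb term cancels exactly against the contribution of the tangential part $-\tfrac12(-1)^{\tilde f\tilde{T_r}}\omega^{rs}T_r(f)T_s$ of $X_f$. Invoking the bracket relation $[T_r,T_j]=-2\omega_{rj}\partial_z$ from \eqref{TANGDIST} together with $\omega^{st}\omega_{tr}=\delta^s_r$, the two $\partial_z$-components cancel and one is left with $[X_f,T_r]=\sum_s c_s\,T_s$ for suitable superfunctions $c_s$, an operator of Heisenberg order $\tfrac12$. Conceptually this cancellation is simply the defining property of a contact vector field: $X_f$ preserves the kernel $\mathrm{Tan}\R^{2l+1|n}$ of the contact form $\alpha$, so $[X_f,T_r]$ stays in $\mathrm{Tan}\R^{2l+1|n}$ and carries no Reeb component. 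Once the three generator computations are in place, the superderivation property gives $\mathrm{ad}_{X_f}(\mathcal{H}^d_{\lambda\mu}(\R^{2l+1|n}))\subseteq\mathcal{H}^d_{\lambda\mu}(\R^{2l+1|n})$, which finishes the Heisenberg case and, through the intersection observation, the bifiltration as well.
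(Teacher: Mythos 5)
Your proposal is correct and follows essentially the same route as the paper: both split $\mathcal{L}^{\lambda\mu}_{X_f}(D)$ into the supercommutator $[X_f,D]$ plus zeroth-order multiplication terms, and both rest on the key fact that $X_f$ preserves $\mathrm{Tan}\,\R^{2l+1|n}$, so that $[X_f,T_r]$ has no Reeb component. You merely supply details the paper leaves implicit --- the generator-by-generator Leibniz argument, the computation $[X_f,\partial_z]=-X_{f'}$, and the observation that invariance of the bifiltration follows automatically from the intersection of the two invariant filtrations.
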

\begin{proof}
We must verify  that the form of differential operator defined by the formulae \eqref{NORMALOPDIFF} doesn't be modified by the action of $\mathcal{K}(2l+1|n)$.
Therefore, we consider a differential operator $D$ of order $k$ and we compute $\mathcal{L}_{X_f}^{\lambda\mu}(D)$ as follow
\begin{eqnarray*}
\mathcal{L}^{\lambda\mu}_{X_f}(D)&=&(X_f+\mu f')D-(-1)^{\tilde{f}\tilde{D}}D.(X_f+\lambda f')\\
                        &=&[X_f,D]+D_k,
\end{eqnarray*}
where $D_k= \mu f'D-(-1)^{\tilde{f}\tilde{D}}\lambda D.f'$.
We can see that the term $D_k$ is a differential operator of the same order than $D$. Because of 
\[
[X_f,T_I]\in<T_1,\cdots,T_{2l+n}>,
\] the term $[X_f,D]$ is a differential operator of order $k$. Therefore the filtrations of $\D_{\lambda\mu}(\R^{2l+1|n})$, $\mathcal{H}_{\lambda\mu}(\R^{2l+1|n})$ and $ \mathcal{D}^{k,d}_{\lambda\mu}(\R^{2l+1|n})$ are preserved by the canonical action of $X_f$.
\end{proof}

The action of $\mathcal{K}(2l+1|n)$ on the space $\mathcal{H}_{\lambda\mu}(\R^{2l+1|n})$ induces the structure of $\mathcal{K}(2l+1|n)$-module on $\mathcal{P}^d_\delta(\R^{2l+1|n})$. If $L_{X_f}^\mathcal{H}$ denotes the action of $X_f$ on $\mathcal{P}^d_\delta(\R^{2l+1|n})$ and if $[D]\in \mathcal{P}^d_\delta(\R^{2l+1|n})$, then we can see that 
\[
 L^{\mathcal{H}}_{X_f}[D]=[\mathcal{L}^{\lambda\mu}_{X_f} D].
 \]
\section{ $\spo(2l+2|n)$-modules on the spaces $\mathcal{S}_\delta$,$\mathcal{P}_\delta$ and $\Sigma_\delta$}
In this section, we consider the Lie sub-superalgebra $\spo(2l+2|n)$ of $\mathcal{K}(2l+1|n)$ constituted by the contact vector fields $X_f$ whose the superfunctions $f$ are degrees to most equal two.
We show that the $\spo(2l+2|n)$-modules on the spaces of differential operators $\D_{\lambda\mu}(\R^{2l+1|n})$, $\mathcal{H}_{\lambda\mu}(\R^{2l+1|n})$ and $ \mathcal{D}^{k,d}_{\lambda\mu}(\R^{2l+1|n})$ are induced on the symbols spaces $\mathcal{S}_\delta(\R^{2l+1|n})$,$\mathcal{P}_\delta(\R^{2l+1|n})$ and $\Sigma_\delta(\R^{2l+1|n})$. We give also the explicit formulas of those actions: to facilitate the computations, we use the symbolical notations by defining the isomorphisms between the spaces $\mathcal{S}_\delta(\R^{2l+1|n})$ and $\mathcal{P}_\delta(\R^{2l+1|n})$ and some subspaces of the space $\mathcal{F}_{\delta}\otimes\mathrm{Pol}(T^*\R^{2l+1|n})$ (see also \cite{CoOv12} in the even case). Therefore we deduce the form of the elements of $\Sigma_\delta^{k,d}(\R^{2l+1|n})$.\\

First, we denote by $\xi_0$ the moment associated to the vector fields $\partial_z$ and by $\xi_r$ the moment associated to the vector fields $T_r$. We obtain therefore the following isomorphism
\begin{equation}\label{POLYNOTATION}
\mathcal{S}_{\delta}^{k}(\R^{2l+1|n})\cong\langle\alpha^{\delta} \xi_{0}^{c}\xi^{I}, c+|I|=k\rangle,
\end{equation}
where $I=(i_1,\ldots,i_{2l+n})$ is a multi-index of length $|I|=i_1+\ldots+i_{2l+n}$ and $\xi^{I}=\xi_{1}^{i_1}\ldots\xi_{2l+n}^{i_{2l+n}}$.\\

If we denote by $\varphi$ the isomorphism defined by the formulae \eqref{POLYNOTATION} and  because of the form \eqref{NORMALOPDIFF}, we obtain 
\begin{equation}\label{ISOMOR_POLYN}
\varphi:\sum_{\substack{c,I\\c+|I|\leqslant k}}D_I\partial_{z}^{c}T_1^{i_1}\ldots T_{2l+n}^{i_{2l+n}}+\mathcal{D}_{\lambda,\mu}^{k-1}(\R^{2l+1|n})\mapsto\sum_{\substack{c,I\\c+|I|=k}}D_I\alpha^{\delta}\xi_0^{c}\xi_1^{i_1}\ldots\xi_{2l+n}^{i_{2l+n}}.
\end{equation}
From now we represent $\xi_0$ by $\zeta$ and the notation $\xi_i$ means the unified notation of the moments $\alpha_i,\beta_i$ and $\gamma_i$. The notation $T_i$ (see the formulae \eqref{tangentdistr}) means therefore the unified notation of vector fields  
 \begin{equation}\label{TANGDISTRIB}
 A_i=\partial_{x_i}+y_i\partial_z,\quad B_i=-\partial_{y_i}+x_i\partial_z,\quad \bar{D}_i=\partial_{\theta_i}-\theta_i\partial_z .
\end{equation}
If the multi-indexes $I,J,T$ are respectively given by $I=(i_1,i_2,\cdots,i_l)$, $J=(j_1,j_2,\cdots,j_l)$ and $T=(t_1,t_2,\cdots,t_n)$ then the quantities $A_1^{i_1}\cdots A_l^{i_l}, \quad B_1^{j_1}\cdots B_l^{j_l}$ and $\bar{D}_1^{t_1}\cdots \bar{D}_n^{t_n} $ are represented by $A^I, B^J$ and $\bar{D}^T$. We represent by $K$ the multi-index $(I,J,T)$ and by $|K|$, $|I|$, $|J|$ and $|T|$
their length respectively.\\
The space $\mathcal{P}_{\delta}^{d}(\R^{2l+1|n})$ defined by the formula \eqref{PD} is isomorphic to the subspace of $\mathcal{F}_{\delta}\otimes\mathrm{Pol}(T^*\R^{2l+1|n})$. More precisely, we have
\[\mathcal{P}_{\delta}^{d}(\R^{2l+1|n})\cong\langle D_{c,K}\alpha^\delta\zeta^{c}\alpha^{I}\beta^{J}\gamma^T,\quad c+\frac{1}{2}|K|=d\rangle,\] for all $c,I,J,K$ and this isomorphism is explicitly given by
\begin{equation}\label{ISOMOR_POLYN0}
\varphi:\sum_{\substack{c,K\\c+\frac{1}{2}|K|\leqslant d}}{D_{c,K}\partial_{z}^{c}A^IB^J\bar{D}^T}+\mathcal{H}_{\lambda,\mu}^{d-\frac{1}{2}}(\R^{2l+1|n})\mapsto\sum_{\substack{c,K\\c+\frac{1}{2}|K|=d}}{D_{c,K}\alpha^\delta\zeta^{c}\alpha^I\beta^J\gamma^T}.
\end{equation}
In the same way can see that the space $\Sigma_{\delta}^{k,d}(\R^{2l+1|n})$ is also isomorphic to the subspace of $\mathcal{F}_{\delta}\otimes\mathrm{Pol}(T^*\R^{2l+1|n})$ and we have
\[
\Sigma_{\delta}^{k,d}(\R^{2l+1|n})\cong\langle D_{c,K}\alpha^\delta\zeta^c \alpha^I\beta^{J}\gamma^T,\quad c+\frac{1}{2}|K|=d,\quad c+|K|=k\rangle
\]
and more explicitly, we have
\begin{multline*}
\varphi:\sum_{\substack{c,J\\c+\frac{1}{2}|K|\leqslant d\\c+|K|\leqslant k}}D_{c,K}{\partial_{z}^{c}A^IB^J\bar{D}^T}+(\mathcal{D}_{\lambda,\mu}^{k,d-\frac{1}{2}}(\R^{2l+1|n})+\mathcal{D}_{\lambda,\mu}^{k-1,d}(\R^{2l+1|n}))\mapsto\\
\sum_{\substack{c,K\\c+\frac{1}{2}|K|=d\\c+|K|=k}}D_{c,K}{\alpha^\delta\zeta^{c}\alpha^I\beta^J\gamma^T}.
\end{multline*}

It is now possible to write the space $\mathcal{P}_\delta(\R^{2l+1|n})$ as a direct sum of the spaces $\Sigma_\delta^{k,d}(\R^{2l+1|n})$ as follow:

\begin{equation}\label{FINSYMBOL_HESEIN}
\mathcal{P}_\delta(\R^{2l+1|n})=\bigoplus_{d\in\frac{1}{2}\N}{\mathcal{H}^d(\R^{2l+1|n})/
\mathcal{H}^{d-\frac{1}{2}}(\R^{2l+1|n})}
=\bigoplus_{d\in\frac{1}{2}\N}\bigoplus_{k=\lceil d\rceil}^{2d}\Sigma_\delta^{k,d}(\R^{2l+1|n})
\end{equation}
where $\lceil x\rceil:=\mathrm{inf}\{n\in\N:n\geqslant x\}$. \\
We recall that if $D\in\mathcal{H}_{\lambda,\mu}^{d}(\R^{2l+1|n})$ and if $X_f$ is the contact vector fields, then the Lie derivative in the direction of $X_f$, denoted by $L_{X_{f}}^{\mathcal{H}}D$ is given by
\[(L_{X_{f}}^{\mathcal{H}}D)(\psi):=L_{X_{f}}(D\psi)-(-1)^{\tilde{f}\tilde{D}}D(L_{X_{f}}\psi),\] and it is the element of $\mathcal{H}_{\lambda,\mu}^{d}(\R^{2l+1|n})$ thanks to the fact that the action of $X_f$ preserves $\mathrm{Tan}\R^{2l+1|n}$.
Therefore, the canonical action of $X_f$ preserves also the bifiltered space $\mathcal{D}_{\lambda,\mu}^{k,d}(\R^{2l+1|n})$.\\

If we consider the Lie sub-superalgebra $\spo(2l+2|n)$ of $\mathcal{K}(2l+1|n)$ constituted by the contact vector fields $X_f$ whose the superfunctions $f$ are degrees to most equal two, we  can see that the action of contact vector fields $X_f$ on $\mathcal{H}^d_{\lambda,\mu}(\R^{2l+1|n})$ and on $\mathcal{D}_{\lambda,\mu}^{k,d}(\R^{2l+1|n})$ induces the $\spo(2l+2|n)$-actions on the spaces $\mathcal{P}^d_{\delta}(\R^{2l+1|n})$ and $\Sigma_{\delta}^{k,d}(\R^{2l+1|n})$.

The following theorem gives the explicit formulas of the actions of the Lie superalgebra $\spo(2l+2|n)$ on the spaces 
$\mathcal{S}_{\delta}(\R^{2l+1|n})$,$\mathcal{P}_\delta(\R^{2l+1|n})$ and on the fine symbol space $\Sigma_\delta(\R^{2l+1|n})$.

\begin{theorem}
If $X_f\in \spo(2l+2|n)$ and if we denote by $L_{X_f}^{\mathcal{P}}$ (resp. $L_{X_f}^{\Sigma}$, resp. $L_{X_f}^{\mathcal{S}}$ ) the actions of $X_f$ on $\mathcal{P}_{\delta}(\R^{2l+1|n})$ (resp. $\Sigma_{\delta}(\R^{2l+1|n})$; resp. $\mathcal{S}_{\delta}(\R^{2l+1|n})$) then these actions are given by
\begin{enumerate}
\item[i)] \begin{multline}\label{derivecontact}
L^\Sigma_{X_f}=f\partial_z+\partial_z(f)\left(\delta-\mathcal{E}_\zeta\right)
-\frac{1}{2}(-1)^{\tilde{f}\tilde{T_r}}\omega^{rs}T_r(f)T_s\\
+\frac{1}{2}(-1)^{\tilde{f}(\tilde{T}_i+\tilde{T}_r)}\omega^{rs}T_iT_r(f)\xi_s
\partial_{\xi_i}.
\end{multline}
\item[ii)] $L_{X_f}^{\mathcal{P}} = L_{X_f}^{\Sigma}$, \\

\item[iii)] \begin{equation}\label{Normal}
L^S_{X_f}= L^\Sigma_{X_f}+\frac{1}{2}(-1)^{\tilde{f}\tilde{T}_r}\omega^{rs}T_r(f')\xi_s
\partial_{\zeta}, 
\end{equation}
\end{enumerate}
where the notation $\mathcal{E}_\zeta$ denotes the Euler operator $\zeta\partial_\zeta$ and the notations $\partial_z$ and $T_i$ denote the action of the vector fields $\partial_z$ and $T_i$ on the coefficients of the symbol.
\end{theorem}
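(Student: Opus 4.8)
The plan is to compute $\mathcal{L}^{\lambda\mu}_{X_f}(D)$ directly on an operator written in the normal form \eqref{NORMALOPDIFF}, and then read off the induced action on each graded space by retaining only the terms of top degree for the relevant filtration. Using \eqref{optoraction} together with $L^\lambda_{X_f}=X_f+\lambda f'$, I first record the decomposition
\[
\mathcal{L}^{\lambda\mu}_{X_f}(D)=[X_f,D]+\mu f'D-(-1)^{\tilde{f}\tilde{D}}\lambda D f',
\]
which already appeared in the preceding proposition. Through the isomorphism $\varphi$ of \eqref{ISOMOR_POLYN}--\eqref{ISOMOR_POLYN0} I then transport everything to the symbol side, replacing $\partial_z$ by $\zeta$ and each $T_r$ by $\xi_r$ while leaving the density coefficients in place; the entire computation thereby reduces to three elementary brackets.

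First I would compute the building blocks. Acting on a coefficient, $[X_f,D_{cK}]=X_f(D_{cK})$, which is exactly the transport operator $f\partial_z-\tfrac12(-1)^{\tilde{f}\tilde{T}_r}\omega^{rs}T_r(f)T_s$ of \eqref{CONTACTFORME} applied to the symbol coefficients; this produces the first and third summands of \eqref{derivecontact}. Next, since $[\partial_z,T_s]=0$ and $\partial_z T_r(f)=T_r(f')$, a short calculation gives
\[
[X_f,\partial_z]=-f'\partial_z+\tfrac12(-1)^{\tilde{f}\tilde{T}_r}\omega^{rs}T_r(f')T_s.
\]
Finally, using $[T_b,T_s]=-2\omega_{bs}\partial_z$ from \eqref{TANGDIST} and the inverse relation between $(\omega_{rs})$ and $(\omega^{rs})$ from the opening remark, I would show that the two Reeb contributions appearing in $[X_f,T_s]$ --- one from $[f\partial_z,T_s]$ and one from the horizontal part of $X_f$ --- cancel, so that $[X_f,T_s]\in\langle T_1,\dots,T_{2l+n}\rangle$ is a pure frame rotation. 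Passed to symbols, this rotation is the operator $\tfrac12(-1)^{\tilde{f}(\tilde{T}_i+\tilde{T}_r)}\omega^{rs}T_iT_r(f)\,\xi_s\partial_{\xi_i}$, the last summand of \eqref{derivecontact}.

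With the brackets in hand I would assemble the result and project. For the Heisenberg filtration (parts i and ii) I keep only the terms preserving the Heisenberg degree $c+\tfrac12|K|$: the $-f'\partial_z$ part of $[X_f,\partial_z]$ becomes $-f'\mathcal{E}_\zeta$ after passing to $\zeta$, its $T_r(f')T_s$ part is discarded since it lowers the Heisenberg degree by $\tfrac12$, and the weight terms $\mu f'D-(-1)^{\tilde{f}\tilde{D}}\lambda Df'$ contribute $\delta f'=\delta\partial_z(f)$ at top order; combined with the transport and rotation pieces this is precisely \eqref{derivecontact}. The identity $L^{\mathcal{P}}_{X_f}=L^{\Sigma}_{X_f}$ of (ii) then follows because the same formula respects the bigrading and, under $\mathcal{P}^d_\delta=\bigoplus_k\Sigma^{k,d}_\delta$ of \eqref{FINSYMBOL_HESEIN}, no additional cross terms arise. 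For the canonical symbol (part iii) I instead project onto $\D^{k}_{\lambda\mu}/\D^{k-1}_{\lambda\mu}$, keeping the terms of top \emph{canonical} degree $c+|K|$: now the previously discarded summand $\tfrac12(-1)^{\tilde{f}\tilde{T}_r}\omega^{rs}T_r(f')T_s$ of $[X_f,\partial_z]$ survives, because $\partial_z$ and $T_s$ share the same canonical order, and on symbols it reads $\tfrac12(-1)^{\tilde{f}\tilde{T}_r}\omega^{rs}T_r(f')\xi_s\partial_\zeta$, which is exactly the correction term of \eqref{Normal}.

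The main obstacle is purely bookkeeping: getting the Koszul signs and the raising and lowering of the $\omega$-indices correct in the two super-commutators, and in particular verifying the cancellation of the Reeb component of $[X_f,T_s]$, which is the contact condition responsible for $[X_f,T_s]$ remaining tangent to the distribution. Once that cancellation is established, the separation of terms by Heisenberg versus canonical degree mechanically yields the three formulas.
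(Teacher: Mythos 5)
Your proposal is correct and follows essentially the same route as the paper: both reduce the computation, via the Leibniz/derivation property of $\mathcal{L}^{\lambda\mu}_{X_f}$ on normal-form monomials, to the three elementary pieces $[X_f,\partial_z]$, $[X_f,T_i]$ and the action on coefficients, and then project according to the Heisenberg, fine, or canonical grading to obtain i)--iii). The paper merely makes the reduction explicit by observing that the symbol spaces are algebras and the operators are derivations, so it suffices to check the formulas on the generators $\zeta$, $\xi_i$ and $g\alpha^\delta$ --- exactly the brackets you compute, including the cancellation of the Reeb component of $[X_f,T_i]$ and the survival of the $\omega^{rs}T_r(f')\xi_s\partial_\zeta$ term only for the canonical symbol.
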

\begin{proof}
The spaces $\Sigma(\R^{2l+1|n}):=\bigcup_{\delta\in\mathbb{R}}\Sigma_{\delta}(\R^{2l+1|n})$, $\mathcal{P}(\R^{2l+1|n}):=\bigcup_{\delta\in\mathbb{R}}\mathcal{P}_{\delta}(\R^{2l+1|n})$ and $\mathcal{S}(\R^{2l+1|n}):=\bigcup_{\delta\in\mathbb{R}}\mathcal{S}_{\delta}(\R^{2l+1|n})$ are actually algebras for the canonical product of symbols. We can consider the operators  $\tilde{L}_{X_f}^{\Sigma}$, $\tilde{L}_{X_f}^{\mathcal{P}}$ and $\tilde{L}_{X_f}^{\mathcal{S}}$  acting respectively on $\Sigma(\R^{2l+1|n})$, $\mathcal{P}(\R^{2l+1|n})$ and $\mathcal{S}(\R^{2l+1|n})$ whose the restrictions on the spaces $\Sigma_{\delta}(\R^{2l+1|n})$, $\mathcal{P}_{\delta}(\R^{2l+1|n})$ and $\mathcal{S}_{\delta}(\R^{2l+1|n})$ are given by $L_{X_f}^{\Sigma}$, $L_{X_f}^{\mathcal{P}}$ and $L_{X_f}^{\mathcal{S}}$ for all $\delta\in\mathbb{R}$. The operators $\tilde{L}_{X_f}^{\Sigma}$, $\tilde{L}_{X_f}^{\mathcal{P}}$ and $\tilde{L}_{X_f}^{\mathcal{S}}$ are actually derivations of the spaces $\Sigma(\R^{2l+1|n})$, $\mathcal{P}(\R^{2l+1|n})$ and $\mathcal{S}(\R^{2l+1|n})$.
We can compute the actions $L_{X_f}^\Sigma$ and $L_{X_f}^\mathcal{S}$ on the generators $\zeta$, $\xi_i$ and $g\alpha^\delta$ of the spaces $\Sigma_\delta(M)$ and $\mathcal{S}_\delta(M)$.\\
If the application of $L_{X_f}^\Sigma$ and $L_{X_f}^\mathcal{S}$ on those generators coincide with the actions of the second members of equations \eqref{derivecontact} and \eqref{Normal} on the same generators and thanks that the right members of those equations are the derivation operators, then the equations \eqref{derivecontact} and \eqref{Normal} are verified.\\
Thanks to the isomorphism $\varphi$ defined by \eqref{ISOMOR_POLYN0}, we compute the operators $L_{X_f}^\Sigma$ and $L_{X_f}^\mathcal{S}$ on the generators $\zeta, \xi_i$ and $g\alpha^\delta$ of the spaces $\Sigma_\delta(M)$ and $\mathcal{S}_\delta(M)$ by using  respectively the Lie derivative of differential operators $\partial_z,T_i$ and $g\alpha^\delta$.\\
We obtain 
\begin{eqnarray*}
L^\Sigma_{X_f}(T_i)&=&[X_f,T_i]+\delta f'T_i\\
               &=&[f\partial_z,T_i]+\delta f'T_i-\frac{1}{2}(-1)^{\tilde{f}\tilde{T}_r}\omega^{rs}\left(-(-1)^{\tilde{f}\tilde{T}_i}[T_i,T_r(f)T_s]\right).\\
               &=&-(-1)^{\tilde{f}\tilde{T_i}}T_i(f)\partial_z+\delta f'T_i\\
               &+&\frac{1}{2}(-1)^{(\tilde{T}_r+\tilde{T_i})\tilde{f}}\omega^{rs}\left(T_iT_r(f)T_s
              +(-1)^{\tilde{T}_i(\tilde{T}_r+\tilde{f})}T_r(f)[T_i,T_s]\right).
\end{eqnarray*}
Since the commutator $[T_i,T_s]$ is equal to $-2\omega_{is}\partial_z$ and using the isomorphism $\varphi$ given by \eqref{ISOMOR_POLYN0}, we obtain
\begin{equation}\label{Lie1}
L_{X_f}^\Sigma(\xi_i)=\left(\delta f'\Id+\frac{1}{2}(-1)^{\tilde{f}(\tilde{T}_r+\tilde{T}_i)}\omega^{rs}T_iT_r(f)\xi_s\partial_{\xi_i}\right)(\xi_i).
\end{equation}
If we compute $L_{X_f}^\Sigma(\partial_z)$ and using the isomorphism $\varphi$ given by \eqref{ISOMOR_POLYN0}, we obtain
\begin{equation}\label{Lie2}
L_{X_f}^\Sigma(\zeta)=\left(\delta-1\right)f'\Id(\zeta),
\end{equation}
and finally we obtain in the same conditions
\begin{equation}\label{Lie3}
L_{X_f}^\Sigma(g\alpha^\delta)=\left(f\partial_z+\delta f'\Id-(-1)^{\tilde{f}\tilde{T_r}}\frac{1}{2}\omega^{rs}T_r(f)T_s\right)(g\alpha^\delta).
\end{equation}

We can see that if we restrict the formula \eqref{derivecontact} to the generators $T_i$, $\partial_z$ and $g\alpha^\delta$, we obtain respectively the formulas (\ref{Lie1}), (\ref{Lie2}) et (\ref{Lie3}). The proof is also the same on the formula  \eqref{Normal}.
\end{proof}

\begin{rmk}
The spaces of symbols $\mathcal{S}_\delta(\R^{2l+1|n})$,$\mathcal{P}_\delta(\R^{2l+1|n})$ and $\Sigma_\delta(\R^{2l+1|n})$ are $\spo(2l+2|n)$-modules.
\end{rmk}

\section{Acknowledgments}
It is a pleasure to thank F. Radoux, P. Mathonet, and J.P. Michel for numerous fruitful discussions and for their interest in our work.


\begin{thebibliography}{10}

\bibitem{Nib14}
A.~Nibirantiza.
\newblock {\em Sur les quantifications équivariantes en supergéométrie de contact}.
\newblock {Th\`ese de Doctorat, Universit\'e de Li\`ege.}
 http://hdl.handle.net/2268/169770, 2014.

\bibitem{Nib15}
A.~Nibirantiza.
\newblock { On the matrix realization of Lie superalgebra of contact projective vector fields $\spo(2l+2|n)$}.
\newblock { Fundamental journal of mathematical physics,} Vol.4, issues 1-2, 2015,pages 53-70.
\newblock {http://www.frdint.com/} 
 
\bibitem{CoOv12}
C.H.~Conley and V.~Ovsienko.
\newblock Linear Differential Operators on Contact manifolds.
\newblock {\em arxiv:math-Ph/1205.6562v1},24p, 2012.

\bibitem{DuvLecOvs99}
    ~C. Duval, and P. Lecomte, and V. Ovsienko.
     \newblock{Conformally equivariant quantization: existence and
              uniqueness},
   \newblock{Ann. Inst. Fourier (Grenoble)},
  \newblock{Universit\'e de Grenoble. Annales de l'Institut Fourier},49(6),1999--2029, 1999.

\bibitem{Lei80}
D.~A.~Leites.
\newblock { Introduction to the theory of Supermanifolds}. 
\newblock Russian Math.Surveys 35:1 (1980),1-64.

\bibitem{Ber87}
F.~A.~Berezin.
\newblock { Introduction to superanalysis}, volume~9 of {\em Mathematical
  Physics and Applied Mathematics}.
\newblock D. Reidel Publishing Co., Dordrecht, 1987.
\newblock Edited and with a foreword by A. A. Kirillov, With an appendix by V.
  I. Ogievetsky, Translated from the Russian by J. Niederle and R. Koteck{\'y},
  Translation edited by D.~Leites.

\bibitem{GarMelOvs07}
H.~Gargoubi, N.~Mellouli, and V.~Ovsienko.
\newblock {Differential operators on supercircle: conformally equivariant
  quantization and symbol calculus.}
\newblock {\em Lett. Math. Phys.}, 79(1):51--65, 2007.

\bibitem{Gr13}
    ~J.Grabowski.
     \newblock{Graded contact manifolds and contact courant algebroids.}
   \newblock{Journal of Geometry and Physics},68(2013),27-58.

\bibitem{MelNibRad13}
    N.~Mellouli, and A. Nibirantiza,  and F. Radoux.
    \newblock{$\spo(2|2)$-Equivariant Quantizations on the Supercircle $S^{1|2}$}.   \newblock{SIGMA 9 (2013), 055, 17 pages}
     \newblock{ http://dx.doi.org/10.3842/SIGMA.2013.055 }, arxiv:math.DG/1302.3727v2
     
\bibitem{Mel09}
N.~Mellouli.
\newblock Second-order conformally equivariant quantization in dimension $1|2$.
\newblock {\em SIGMA}, 5(111), 2009.

\bibitem{MatRad11}
P.~Mathonet and F.~Radoux.
\newblock Projectively equivariant quantizations over the Superspace $\R^{p|q}$.
\newblock {\em Lett. Math. Phys.}, 98:311--331, 2011.

\bibitem{Lecras}
P. B.~A. Lecomte.
\newblock Classification projective des espaces d'op\'erateurs diff\'erentiels
  agissant sur les densit\'es.
\newblock {\em C. R. Acad. Sci. Paris S\'er. I Math.}, 328(4):287--290, 1999.

\bibitem{Leconj}
P. B.~A. Lecomte.
\newblock Towards projectively equivariant quantization.
\newblock {\em Progr. Theoret. Phys. Suppl.}, (144):125--132, 2001.
\newblock Noncommutative geometry and string theory (Yokohama, 2001).
\bibitem{LO}
P.~B.~A. Lecomte and V.~Yu. Ovsienko.
\newblock Projectively equivariant symbol calculus.
\newblock {\em Lett. Math. Phys.}, 49(3):173--196, 1999.

\bibitem{MR2}
P.~Mathonet and F.~Radoux.
\newblock On natural and conformally equivariant quantizations.
\newblock {\em J. Lond. Math. Soc. (2)}, 80(1):256--272, 2009.

\bibitem{sarah}
S. Hansoul.
\newblock Existence of natural and projectively equivariant quantizations.
\newblock {\em Adv. Math.}, 214(2):832--864, 2007.
\bibitem{LMR}
T.~Leuther, P.~Mathonet and F.~Radoux.
\newblock On $\mathfrak{osp}(p+1,q+1|2r)$-equivariant quantizations.
\newblock {\em J. Geom. Phys.}, 62:87--99, 2012.
\bibitem{LR}
T.~Leuther and F.~Radoux.
\newblock Natural and Projectively Invariant Quantizations on Supermanifolds.
\newblock {\em SIGMA}, 7(34):12 pages, 2011.

 \bibitem{KacAdvances}
V.~G. Kac.
\newblock Lie superalgebras.
\newblock {\em Advances in Math.}, 26(1):8--96, 1977.

\end{thebibliography}
\end{document}